\title{A short proof of the `thumbtack lemma'}
\author{
        Adam Harris \\
        Oxford University
}
\date{\today}
\newtheorem{thm}{Theorem}[section]
\newtheorem{cor}[thm]{Corollary}
\newtheorem{lem}[thm]{Lemma}
\newtheorem{pro}[thm]{Proposition}
\theoremstyle{remark}
\theoremstyle{remark}
\theoremstyle{definition}
\def\G{\mathbb{ G}}
\def\N{\mathbb{ N}}
\def\Z{\mathbb{ Z}}
\def\Q{\mathbb{ Q}}
\def\C{\mathbb{ C}}
\def\*{{}^{*}}
\def\cF{\mathcal{F}}
\def\ra{\rightarrow}
\def\lora{\longrightarrow}
\def\gl2q{\text{\textnormal{GL}}_2^+(\Q)}
\def\ggl2{\text{\textnormal{GL}}_2}
\def\sl2{\text{\textnormal{SL}}_2}
\def\psl2{\text{\textnormal{PSL}}_2}
\def\sv2{\text{SZ}_2}
\def\z2{\text{Z}_2}
\def\Hom{\text{\textnormal{Hom}}}
\def\pgl2{\text{PGL}_2}
\def\m2z{\text{M}_2(\Z)}
\def\Spec{\text{\textnormal{Spec}}}
\def\Gal{\text{\textnormal{Gal}}}
\def\g0n{\Gamma_0(N)}
\begin{document}

\maketitle

\begin{abstract}
We give a short proof of the main algebraic result of \cite{zilber2006covers}, also known as the `thumbtack lemma'.
\end{abstract}

\section{Introduction}

In this note we give a short proof of the main algebraic result (Theorem 2) in \cite{zilber2006covers}. This is what has previously been called the $n=0$ and $n=1$ thumbtack lemmas which we call `arithmetic homogenity' and `geometric homogeneity' respectively. The exposition given here is hopefully written in a style more accesible to algebraic geometers, in the familiar setting of the pro-\'etale cover. However, the pro-\'etale cover here is purely a conceptual device, and the actual proof does not require any of this machinery. \newline

Let $F$ be a field of characteristic 0, and consider the multiplicative group of a field $\G_m(F)$ (which we will also sometimes denote by $F^{\times}$). Model theoretically we may think of $\G_m(F)$ as a definable set in $\cF:=\langle F,+, \cdot ,0,1 \rangle$
as $$\G_m(F):=\{(x,y) \in F^2 \ | \ xy=1 \}\subset F^2,$$
or we may also think of $\G_m(F)$ as the $F$-points of the scheme $\Spec(\Q[X,Y] / (XY-1))$
It is well known (e.g. by Riemann-Hurwitz) that finite \'etale covers of $\G_m(F)$ are of the form
$$\G_m(F) \overset{x \mapsto x^n}{\lora} \G_m(F)$$
and that the \'etale automorphisms of this cover are given by multiplications by $n$'th roots of unity.
%So in this situation, the field of definitition of all \'etale covers and their automorphisms is $\Q(\mu)$. \newline

For simplicity, we consider all fields in the remained of this note as embedded in $\C$. Let $$\hat{p} : \hat{U} \ra \G_m(\C)$$ be the pro-\'etale cover (see for example \cite{milne1998lectures}). We will be interested in the geometric \'etale fundamental group 
$$\pi_1^{et}:=\pi_1^{et}(\G_m(\C)) \cong \hat{\Z}, $$
and we note that  the (geometric) \'etale fundamental group is unchanged when taking an extension of algebraically closed fields i.e. for $F$ embedded in $\C$,  $\pi_1^{et}(\G_m(\bar{F}))= \pi_1^{et}(\G_m(\C))$. In particular, the content of this note is regarding certain Galois representations into $\pi_1^{et}$. \newline

Given $x \in \G_m(F)$, the fibre $\hat{p}^{-1}(x) = t$ is a right $\pi_1^{et}$-torsor. Here, the fibre $\hat{p}^{-1}(x)$ is just a compatible sequence $(x^{1/n})_n$ of $n$'th roots of $x$, and $\pi_1^{et}$ acts on the fibre via multiplication by a compatible sequence of roots of unity. There is also a left action of $G_F:=\Gal(\bar{F} / F)$ on $\pi_1^{et}$ (which we may think of coming via its action on the roots of unity), and the actions of $\pi_1^{et}$ and $G_F$ on the fibre $t$ are compatible i.e. for $u \in t$ we have
$$\sigma(u.g) = \sigma(u).\sigma(g).$$
Given such a torsor $t=\hat{p}^{-1}(x)$, we may produce a cocycle
$$\rho_x : G_F \ra \pi_1^{et}$$
as follows:\newline

Given $\sigma \in G_K$ and $u \in t$, since $t$ is a $\pi_1^{et}$-torsor there is $g_{\sigma} \in \pi_1^{et}$ such that $\sigma(u) = u.g_{\sigma}$. Now since the actions of $G_K$ and $\pi_1^{et}$ are compatible, we have
$$(\sigma_1 \sigma_2)(u) = \sigma_1(\sigma_2(u)) = \sigma_1(u.g_{\sigma_2}) = \sigma_1(u) . \sigma_1(g_{\sigma_2}) = u.g_{\sigma_1}\sigma_1(g_{\sigma_2})$$
so that
$$\sigma \mapsto g_{\sigma}$$
is a cocycle.\newline

We can see concretely what is happening, as if we take $x \in K$ then for $\sigma \in G_K$ we have
$$\sigma(u) = \sigma(x^{1/n})_n = (x^{1/n})_n.(\zeta_n)_n$$
for some compatible sequence of roots of unity $(\zeta_n)_n$. So here, the action of $ g_{\sigma} \in \pi_1^{et} $ is given by componentwise multiplication by $(\zeta_n)_n$ and we can think of $\rho_x$ as a profinite version of the traditional Kummer map (arising as the connecting homomorphism in the Kummer exact sequence - see the proof of \ref{kummer}) i.e.
\begin{align*}
 \rho: \G_m(F) &\ra H^1(G_F , \pi_1^{et})\\
 x & \mapsto \left[ \sigma \mapsto \frac{\sigma(x^{1/n})_n}{(x^{1/n})_n} \right]
\end{align*}
(this is well defined since clearly any two elements of a fibre $\hat{p}^{-1}(x)$ differ by a coboundary). Now if the roots of unity are in $K$, then $H^1(G_F,\pi_1^{et}) = \Hom(G_F , \pi_1^{et})$ and we obtain a much clearer picture of what $G_F$ looks like (i.e. we are in the realms of Kummer theory, see proof of \ref{kummer}).\newline

For a tuple $(x_1,...,x_n): =\bar{x} \in F^n$ and the correspondiing tuple of fibres $$\hat{p}^{-1}(\bar{x}):=(\hat{p}^{-1}(x_1),...,\hat{p}^{-1}(x_n))$$ we define
\begin{align*}
\rho_{\bar{x}}:G_F & \ra (\pi_1^{et})^n.
\end{align*}
componentwise.

 We are going to prove the following two theorems, and the proof is based on \cite[V \S 4]{lang1979elliptic}:

\begin{thm}[Arithmetic homogeneity]\label{thumb0}Let $K$ be a number field and $\bar{a} \subset K^{\times}$, a multiplicatively independent tuple. Then the image of the continuous homomorphism
$$\rho_{\bar{a}} : \Gal(K(\mu, \hat{p}^{-1}(\bar{a})) / K(\mu )) \hookrightarrow \pi_1^{et}(\C^{\times})^r$$
 is open.
\end{thm}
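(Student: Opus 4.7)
The plan is to reformulate the theorem via Kummer theory, decompose prime-by-prime using $\hat\Z = \prod_\ell \Z_\ell$, and handle the ``good'' and ``bad'' primes by different arguments.

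First, since $K(\mu) \supset \mu_n$ for every $n$, the map $\rho_{\bar a}|_{G_{K(\mu)}}$ is a genuine continuous homomorphism, and for distinct primes $\ell \neq \ell'$ the pro-$\ell$ and pro-$\ell'$ Kummer extensions $K(\mu, \bar a^{1/\ell^\infty})/K(\mu)$ and $K(\mu, \bar a^{1/\ell'^\infty})/K(\mu)$ are linearly disjoint. Hence the image of $\rho_{\bar a}$ decomposes as $\prod_\ell H_\ell \subset \prod_\ell \Z_\ell^r = \hat\Z^r$, with $H_\ell := \Gal(K(\mu, \bar a^{1/\ell^\infty})/K(\mu))$, and openness is equivalent to the two conditions: (i) $H_\ell$ has finite index in $\Z_\ell^r$ for every $\ell$, and (ii) $H_\ell = \Z_\ell^r$ for all but finitely many $\ell$. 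Standard Kummer theory identifies $\Gal(K(\mu, \bar a^{1/\ell^n})/K(\mu))$ with the annihilator in $(\Z/\ell^n\Z)^r$ of $N_n := \{(m_i) : \prod_i a_i^{m_i} \in (K(\mu)^\times)^{\ell^n}\}$; so (i) is equivalent to $|N_n|$ being bounded in $n$, while (ii) reduces to $N_1 = 0$ for almost all $\ell$, since $H_\ell$ is a closed (hence finitely generated) $\Z_\ell$-submodule of $\Z_\ell^r$ to which Nakayama's lemma applies.

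For the ``good primes'' condition (ii), I would first deduce from multiplicative independence of $\bar a$ in $K^\times$ (combined with Dirichlet's unit theorem and elementary valuation arguments) that $\bar a$ remains independent in $K^\times/(K^\times)^\ell$ for all but finitely many primes $\ell$. For such $\ell$, $[K(\mu_\ell):K]$ divides $\ell - 1$ and is thus coprime to $\ell$, giving $K^\times \cap (K(\mu_\ell)^\times)^\ell = (K^\times)^\ell$ by a standard degree argument; iterating up the cyclotomic tower, and using that the prime-to-$\ell$ part cannot affect $\ell$-th powers of elements of $K$, propagates the independence to $K(\mu)^\times/(K(\mu)^\times)^\ell$, so that $N_1 = 0$.

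The main obstacle --- and the real content of the theorem --- is (i) for each of the finitely many bad primes $\ell$. Since $H_\ell$ is a closed $\Z_\ell$-submodule of $\Z_\ell^r$, condition (i) is equivalent to $H_\ell$ having full $\Z_\ell$-rank $r$; failure means $H_\ell$ lies in the kernel of some nonzero $\Z_\ell$-linear form $\lambda = (\lambda_1,\ldots,\lambda_r)$. Approximating $\lambda$ by integer vectors $\lambda^{(n)} \equiv \lambda \pmod{\ell^n}$ then shows that $b_n := \prod_i a_i^{\lambda_i^{(n)}} \in K^\times$ is an $\ell^n$-th power in $K(\mu_{\ell^\infty})$ for every $n$ (again by the degree argument, any such $\ell^n$-th root must already lie in the pro-$\ell$ part of the cyclotomic tower). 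Following \cite[V \S 4]{lang1979elliptic}, the contradiction will come from the structural fact that no nontrivial element of $\langle \bar a\rangle$ can be infinitely $\ell$-divisible in $K(\mu_{\ell^\infty})^\times$: at primes $\mathfrak p \nmid \ell$, the unramifiedness of $K(\mu_{\ell^\infty})/K$ forces the $\ell$-adic limit $\sum \lambda_i v_{\mathfrak p}(a_i)$ to vanish, killing the ``non-unit'' component of $\lambda$; the remaining ``$\ell$-unit'' component is then killed by a local analysis at primes above $\ell$ (via the $\ell$-adic logarithm applied in $\cO_K[1/\ell]^\times$, which is finitely generated by the generalized unit theorem). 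Combined with the torsion-freeness of $\langle \bar a\rangle$, this contradicts the existence of $\lambda$ and completes the proof. This infinite-divisibility analysis at the bad primes is where all the genuine arithmetic input goes, and is the hardest step.
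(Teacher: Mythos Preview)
Your overall architecture matches the paper's: both decompose $\hat\Z^r=\prod_\ell\Z_\ell^r$ and prove (ii) surjectivity at almost all $\ell$ (``horizontal'') and (i) openness at each $\ell$ (``vertical''). For (ii) the paper does not go via Dirichlet/valuations but uses a purely Galois-theoretic trick (Proposition~\ref{arith2}, after Lang): if $\ell$ is odd, coprime to $[\Gamma':\Gamma]$, and $\Gal(K(\mu_{\ell^n})/K)\cong(\Z/\ell^n\Z)^\times$, then any $\beta\in K(\mu_{\ell^n})\setminus K$ with $\beta^\ell\in\Gamma$ would make $K(\mu_\ell,\beta)/K$ non-abelian inside the abelian $K(\mu_{\ell^n})/K$. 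Combined with Kummer theory (Theorem~\ref{kummer}) this gives $\Gal\cong\Gamma/\Gamma^{\ell^n}\cong(\Z/\ell^n\Z)^r$ at once. Your route via independence modulo $\ell$-th powers plus Nakayama is also correct, and arguably more natural to an arithmetic geometer, but the paper's argument is shorter and needs only the finiteness of $[\Gamma':\Gamma]$ (Corollary~\ref{corfingen}) rather than any unit theorem.

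The substantive divergence is in (i). The paper's key device here is \emph{Sah's lemma}: for $\alpha$ central in $G=\Gal(K(\mu)/K)$, the endomorphism $\chi(\alpha)-1$ kills $H^1(G,\mu_{\ell^m})$. Since $G$ is open in $\hat\Z^\times$ one can choose $\alpha$ acting on $\mu_{\ell^\infty}$ as $1+\ell^{m_0}$ with $m_0$ independent of $m$; this converts ``$b$ is an $\ell^m$-th power in $K(\mu_{\ell^m})$'' into ``$b^{\,\ell^{m_0}}$ is an $\ell^m$-th power in $K$ itself'' (Proposition~\ref{sahcor}). The contradiction with $\lambda\neq 0$ then follows instantly from the free-abelian structure of $K^\times/\mathrm{torsion}$ (Lemma~\ref{freeab}), with no local analysis whatsoever. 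Your proposed substitute --- unramifiedness at $\mathfrak p\nmid\ell$ plus the $\ell$-adic logarithm at $\mathfrak p\mid\ell$ --- is where I would push back. The valuation step at $\mathfrak p\nmid\ell$ is fine. But the logarithm step is underspecified: to deduce $\sum_i\lambda_i\log_{\mathfrak p}(a_i)=0$ from $b_n=c_n^{\ell^n}$ you would need a uniform lower bound on $v_{\mathfrak p}(\log c_n)$ as $c_n$ ranges over the deeply ramified field $K_{\mathfrak p}(\mu_{\ell^\infty})$, and even granting that, passing from $\sum_i\lambda_i\log_{\mathfrak p}(a_i)=0$ for all $\mathfrak p\mid\ell$ back to $\lambda=0$ is precisely the injectivity of $\cO_K[1/\ell]^\times\otimes_\Z\Z_\ell\to\prod_{\mathfrak p\mid\ell}K_{\mathfrak p}$, i.e.\ Leopoldt's conjecture, which is not known for general number fields. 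Sah's lemma sidesteps this entirely by descending to $K^\times$ cohomologically rather than analytically; I would recommend replacing your local step with it.
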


\begin{thm}[Geometric homogeneity]\label{thumb1}Let $K \subset \C$ be an algebraically closed field, and $\bar{a} \subset \C-K$ multiplicatively independent. Then the image of the continuous homomorphism
$$\rho_{\bar{a}} : \Gal(K( \hat{p}^{-1}(\bar{a})) / K(\bar{a})) \hookrightarrow \pi_1^{et}(\C^{\times})^r$$
 is open.
\end{thm}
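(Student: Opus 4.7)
The plan is to reduce the openness of the image to a classical Kummer-theoretic statement, and then to establish that statement by a standard computation with discrete valuations on the function field $F = K(\bar{a})$.

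Since $K$ is algebraically closed we have $\mu_\infty \subset K \subset F$, so by the discussion preceding the theorem $\rho_{\bar{a}}$ is an honest continuous homomorphism, and the Kummer pairing identifies its image modulo $n$ with the Pontryagin dual (via pairing with $\mu_n$) of $\langle a_1,\dots,a_r\rangle F^{\times n}/F^{\times n}$. Setting
$$B_n := \{\bar{k}\in\Z^r : a_1^{k_1}\cdots a_r^{k_r}\in F^{\times n}\}\supseteq n\Z^r,$$
the image of $\rho_{\bar{a}}$ modulo $n$ has index exactly $|B_n/n\Z^r|$ in $(\Z/n\Z)^r$. Consequently its image in $\hat{\Z}^r$ is open if and only if there exists $N$ with $|B_n/n\Z^r|\le N$ for every $n$.

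Next I would exploit that $F/K$ is finitely generated over the algebraically closed field $K$ and that the hypothesis should be read as: $a_1,\dots,a_r$ are multiplicatively independent in the torsion-free group $F^\times/K^\times$. On any normal projective model $X$ of $F$ over $K$, one has $\mathcal{O}(X)^\times = K^\times$, so the Weil divisor map embeds $F^\times/K^\times$ into $\bigoplus_v\Z$ with $v$ ranging over discrete valuations of $F$ trivial on $K$. The images of $a_1,\dots,a_r$ are $\Z$-independent there, so one may choose $r$ such valuations $v_1,\dots,v_r$ so that the integer matrix $M=(v_j(a_i))_{j,i}$ has nonzero determinant $\Delta$. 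For any $\bar{k}\in B_n$, writing $\prod_i a_i^{k_i}=c^n$ with $c\in F^\times$ and applying each $v_j$ yields $\sum_i k_i v_j(a_i)=n\,v_j(c)$, i.e.\ $M\bar{k}\equiv 0\pmod{n}$. Multiplication by the adjugate of $M$ then gives $\Delta\bar{k}\equiv 0\pmod{n}$, so $|B_n/n\Z^r|\le\Delta^r$, a bound independent of $n$.

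The principal obstacle lies in the geometric input of the middle step: one must first interpret the multiplicative independence hypothesis correctly (the argument genuinely fails for pairs such as $a$ and $2a$, which are multiplicatively independent in $\C^\times$ but become dependent modulo $K^\times$), and one then needs the fact that valuations trivial on $K$ separate $F^\times/K^\times$. Once this is granted, the remaining work is essentially Cramer's rule, and parallels the classical computation in the Kummer theory of $\G_m$ over a function field found in Lang V \S4.
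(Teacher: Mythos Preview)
Your argument is correct, and it takes a route that is related to but organisationally different from the paper's.

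The paper treats geometric homogeneity by the same horizontal/vertical split used for the arithmetic case: it works one prime $l$ at a time, uses Kummer theory together with the free-abelian decomposition of $(KL)^\times$ (the second clause of Lemma~\ref{freeab}) and the resulting finiteness of $[\Gamma':\Gamma]$ to get surjectivity for almost all $l$, and then a $\Z_l$-submodule argument for openness at every $l$; in the geometric situation Sah's lemma drops out because $\mu_\infty\subset K$. You instead handle all $n$ simultaneously: after the Kummer-duality reduction to bounding $|B_n/n\Z^r|$, you replace the abstract free-abelian statement by the explicit divisor embedding $F^\times/K^\times\hookrightarrow\bigoplus_v\Z$ and extract a uniform bound $|\Delta|^r$ via the adjugate. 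Both arguments ultimately rest on the same fact---that $F^\times/K^\times$ is torsion-free---but yours is more self-contained and produces an explicit index bound without the prime decomposition, while the paper's scheme has the virtue of running in parallel with the arithmetic case. Your observation about reading the hypothesis as independence in $F^\times/K^\times$ is exactly what the paper does when, just after Lemma~\ref{freeab}, it takes $\bar{a}$ ``from the free abelian part''.
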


In fact, geometric homogeneity falls out of the proof of arithmetic homogeneity, and is easier since no cohomology is needed to deal with the roots of unity. The main results (Theorems \ref{thumb0} and \ref{thumb1}) of this note, along with the elementary fact that
$$\Gal(\Q(\mu) / \Q) \cong \hat{\Z}^{\times},$$
and the main result of \cite{bays2012quasiminimal}, imply the main categoricity result (Theorem 1) of \cite{zilber2006covers}.

\section{The proof}

A subgroup of a profinite group is open iff it is closed and of finite index. To prove that the image is open in the profinite group 
$$\pi_1^{et} \cong \hat{\Z} \cong \prod_{l} \Z_l$$ we split the proof into a `horizontal' and a `vertical' result:

\begin{lem}\label{horiz}[Horizontal openness]
The image of the continuous homomorphism
$$\rho_{l^{\infty}} : \Gal(K(\mu_{l^{\infty}},\hat{p}_l^{-1}(\bar{a})) / K(\mu_{l^{\infty}}))  \hookrightarrow (\pi_1^{et})_l( \G_m(\C))^r$$
is surjective for almost all primes $l$.
\end{lem}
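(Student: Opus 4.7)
The plan is to reduce, via a Nakayama-type observation, to a mod-$l$ statement, and then verify that statement for cofinitely many $l$ by combining Kummer theory with a valuation analysis at finite places of $K$.

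First I would use the fact that, since $\Gal(K(\mu_{l^\infty}, \hat{p}_l^{-1}(\bar{a}))/K(\mu_{l^\infty}))$ is compact, the image $H$ of $\rho_{l^\infty}$ is automatically a closed subgroup of $\Z_l^r \cong (\pi_1^{et})_l^r$. A closed subgroup of $\Z_l^r$ equals the whole group if and only if its image under reduction modulo $l$ equals $(\Z/l\Z)^r$, by a standard Nakayama-type $l$-adic approximation argument. Consequently it suffices to prove surjectivity of
$$\rho_l : \Gal\bigl( K(\mu_{l^\infty}, a_1^{1/l}, \ldots, a_r^{1/l}) / K(\mu_{l^\infty}) \bigr) \lora \mu_l^r$$
for almost all primes $l$.

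Next, I would invoke Kummer theory. Since $\mu_l \subset K(\mu_{l^\infty})$, cyclic Kummer theory identifies this Galois group with $\Hom(B_l, \mu_l)$, where $B_l$ is the image of the subgroup $\langle a_1, \ldots, a_r \rangle \subset K^\times$ inside $K(\mu_{l^\infty})^\times / K(\mu_{l^\infty})^{\times l}$. Surjectivity of $\rho_l$ onto $\mu_l^r$ is therefore equivalent to the assertion that the classes of $a_1, \ldots, a_r$ remain $\F_l$-linearly independent in $K(\mu_{l^\infty})^\times / K(\mu_{l^\infty})^{\times l}$.

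The main obstacle is establishing this independence for cofinitely many $l$, and I would attack it through valuations. The key input is that each finite subextension $K(\mu_{l^n})/K$ is unramified at every finite prime $\fp$ of $K$ with $\fp \nmid l$; hence any relation $\prod_i a_i^{n_i} = c^l$ in $K(\mu_{l^\infty})^\times$, after extending $v_\fp$ to a prime $\fP$ above it, forces $\sum_i n_i v_\fp(a_i) \equiv 0 \pmod l$ for every such $\fp$. Using that the $a_i$ are multiplicatively independent in $K^\times$, one expects to produce primes $\fp_1, \ldots, \fp_r$ of $K$ so that the valuation matrix $M = (v_{\fp_j}(a_i))$ has nonzero determinant $D \in \Z$; Cramer's rule then gives $D \cdot n_i \equiv 0 \pmod l$, and any $l \nmid D \cdot |\mu(K)|$ forces $n_i \equiv 0 \pmod l$. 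The genuinely delicate point --- and where I would expect the argument to need the most care --- is handling the unit part of the $a_i$, because finite-prime valuations are blind to units; this will require either a preliminary reduction via Dirichlet's unit theorem and archimedean absolute values (equivalently a height/regulator argument on a basis of $\cO_K^\times$ modulo torsion) or enlargement to an $S$-unit valuation matrix, with the finitely many offending primes $l$ absorbed into the ``almost all''.
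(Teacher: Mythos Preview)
Your Nakayama reduction and the Kummer-theoretic reformulation are correct, and the valuation argument is sound for the part of $\Gamma=\langle a_1,\ldots,a_r\rangle$ that is visible at finite places. The genuine gap is the unit part, and neither of your proposed remedies closes it. Archimedean absolute values give real-valued constraints, not congruences modulo $l$: if $\prod_i u_i^{n_i}=c^l$ with $c$ a unit in some $K(\mu_{l^m})$, then in the logarithmic embedding one only gets $L(c)=\tfrac{1}{l}\sum_i n_i L(u_i)$, and the unit lattice of $K(\mu_{l^m})$ becomes arbitrarily fine as $m$ grows, so no regulator bound forces $l\mid n_i$. Enlarging to $S$-units does not help either, since a global unit has valuation zero at every finite place regardless of $S$. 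In other words, there is no purely valuation- or height-theoretic obstruction to a unit of $K$ acquiring an $l$-th root somewhere in the cyclotomic tower; what actually prevents it is a Galois-theoretic fact.

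The paper's proof supplies exactly this missing Galois input, and in doing so bypasses valuations entirely. One first records (from the freeness of $K^\times$ modulo torsion) that the divisible hull $\Gamma'$ of $\Gamma$ in $K^\times$ satisfies $[\Gamma':\Gamma]<\infty$. Then for $l$ prime to $2[\Gamma':\Gamma]$ and large enough that $\Gal(K(\mu_{l^n})/K)\cong(\Z/l^n\Z)^\times$, one shows $\Gamma\cap K(\mu_{l^n})^{\times l^n}=\Gamma^{l^n}$ directly: if $\alpha\in\Gamma$ had an $l$-th root $\beta\in K(\mu_{l^n})\setminus K$, then $K(\mu_l,\beta)/K$ would be the splitting field of $X^l-\alpha$, a non-abelian extension of degree $l(l-1)$ sitting inside the abelian extension $K(\mu_{l^n})/K$, which is impossible. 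This one observation handles units and non-units uniformly, and Kummer theory then gives $\Gal(K(\mu_{l^n},\Gamma^{1/l^n})/K(\mu_{l^n}))\cong\Gamma/\Gamma^{l^n}\cong(\Z/l^n\Z)^r$ for every $n$, so surjectivity holds without any Nakayama step. You could rescue your outline by invoking this abelian/non-abelian trick for the unit part, but once you do, the valuation analysis becomes superfluous.
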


\begin{lem}\label{vert}[Vertical openness]
The image of the continuous homomorphism
$$\rho_{l^{\infty}} : \Gal(K(\mu_{l^{\infty}},\hat{p}_l ^{-1}(\bar{a})) / K(\mu_{l^{\infty}}))  \hookrightarrow (\pi_1^{et})_l( \G_m(\C))^r$$
is open for all primes $l$.
\end{lem}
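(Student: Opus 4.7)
The plan is to work over the field $L := K(\mu_{l^\infty})$, where, since $L$ contains all $l$-power roots of unity, the cocycle $\rho_{l^\infty}$ becomes a genuine continuous homomorphism $G_L \to \Z_l^r$. Writing $\Gamma := \langle a_1,\dots,a_r\rangle$, which is free of rank $r$ by multiplicative independence, the Kummer pairing identifies, at each finite level,
$$\Gal\bigl(L(\sqrt[l^n]{\bar a})/L\bigr) \,\cong\, \Hom\bigl(\Gamma/(\Gamma \cap L^{\times l^n}),\,\mu_{l^n}\bigr),$$
so the image of $\rho_{l^\infty}$ modulo $l^n$ is the annihilator of $(\Gamma \cap L^{\times l^n})/\Gamma^{l^n}$ inside $(\Z/l^n\Z)^r$. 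Since a closed subgroup of $\Z_l^r$ is open iff it has finite index, openness follows once one establishes a uniform bound
$$\bigl|(\Gamma \cap L^{\times l^n})/\Gamma^{l^n}\bigr| \,\leq\, C \quad \text{for all } n,$$
i.e.\ that no element of $\Gamma$ is $l$-divisible in $L^\times$ past some fixed depth.

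The main tool I will use is valuation analysis at primes of $K$ away from $l$. Any such prime $\mathfrak p$ is unramified in $L/K$, so $v_\mathfrak p$ extends to $L$ with value group $\Z$; applying it to a relation $\prod a_i^{k_i} = \beta^{l^n}$ in $L^\times$ yields the congruence $\sum_i k_i v_\mathfrak p(a_i) \equiv 0 \pmod{l^n}$. Assembling enough such congruences produces a linear system $M\vec k \equiv 0 \pmod{l^n}$ with integer matrix $M = [v_{\mathfrak p_j}(a_i)]$. If one can choose the $\mathfrak p_j$'s so that $M$ has full rank $r$ with $l$-adically bounded determinant, Cramer's rule immediately forces $\vec k \in l^{n-C}\Z^r$ and finishes the argument.

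The hard part -- and the crux of the Lang-style argument in \cite[V \S 4]{lang1979elliptic} -- is that such a full-rank $M$ need not exist: some multiplicative combinations of the $a_i$ may land in the $l$-integral unit group $\cO_{K,\{l\}}^\times$, where every non-$l$-adic valuation vanishes identically, leaving "unit directions" of $\Gamma$ totally unconstrained by valuations. To kill these off I will exploit the abelianness of $L/K$: if a unit combination $\gamma \in \Gamma$ were $l^n$-divisible in $L$ for all $n$, then the Galois closure of $K(\gamma^{1/l^\infty})/K$ would sit inside the abelian extension $L$, but that closure is a semidirect product $\Z_l \rtimes \Z_l^\times$ with a nontrivial cyclotomic action, which is abelian only when $\gamma$ is a root of unity -- contradicting torsion-freeness of $\Gamma$. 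A quantitative version of this dichotomy bounds the admissible $l$-depth by a constant depending only on $\Gamma$, $K$ and $l$, which combined with the valuation step delivers the required $C$. In the geometric case ($K$ algebraically closed, $L = K$) this obstacle disappears, because $K(\bar a)^\times / K^\times$ injects into the divisor group of an affine variety over $K$ and divisorial valuations directly deliver a full-rank matrix $M$.
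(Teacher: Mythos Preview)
Your reformulation via Kummer duality---reducing openness to a uniform bound on $\bigl|(\Gamma \cap L^{\times l^n})/\Gamma^{l^n}\bigr|$---is correct and equivalent to what the paper needs. But the route you take from there diverges from the paper's, and the divergence does not actually avoid the hard step.

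The paper's argument is shorter and does not split into ``valuation directions'' versus ``unit directions'' at all. It observes that the image of $\rho_{l^\infty}$ is a closed $\Z_l$-submodule of $\Z_l^r$, hence free of some rank $s\le r$; if $s<r$ a $\Z_l$-linear relation $\sum\eta_i\rho_{a_i}=0$ exists, and reducing modulo $l^m$ produces an element $a=\prod a_i^{\eta_{i,m}}\in\Gamma$ with $a^{1/l^m}\in K(\mu_{l^m})$. The single cohomological input is then Sah's lemma (Proposition~\ref{sahcor}): it yields a constant $\lambda=\lambda(K,l)$, independent of $m$, with $a^\lambda\in K^{\times l^m}$, which for large $m$ contradicts the free-abelian structure of $K^\times/\mu$ from Lemma~\ref{freeab}. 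No valuations, no case split.

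Your valuation matrix handles only the quotient $\Gamma\big/\bigl(\Gamma\cap\cO_{K,\{l\}}^\times\bigr)$; the entire difficulty is pushed into the ``unit directions,'' where you invoke an abelian/nonabelian dichotomy. That qualitative dichotomy is exactly Proposition~\ref{arith2}, which the paper uses for \emph{horizontal} openness---and it carries the hypotheses $\Gal(K(\mu_n)/K)\cong(\Z/n\Z)^\times$ and $(n,2[\Gamma':\Gamma])=1$, which fail for the finitely many bad primes $l$ that vertical openness must also cover. Your sentence ``a quantitative version of this dichotomy bounds the admissible $l$-depth by a constant'' is precisely the content of Proposition~\ref{sahcor}, and its proof is Sah's lemma: from the abelianness relation $(\chi(\sigma)-1)\rho(\tau)=(\chi(\tau)-1)\rho(\sigma)$ one extracts the uniform exponent $\lambda=l^{m_0}$. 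So your proposal is not wrong, but the valuation detour buys nothing, and the step you defer is exactly the step the paper isolates and proves via Sah.
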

For the definition of the $l$-adic fibre $ \hat{p}_l^{-1}(\bar{a})$, just take all compatible sequences of $l^{n}$th roots of $\bar{a}$ for all $n$ and for the definition of $\rho_{l^{\infty} , \bar{a}}$ just choose one of them and proceed as before.

\begin{lem}\label{freeab} \cite[2.1]{zilber2006covers}
Let $K$ be a finitely generated extension of $\Q$. 
Then
$$K^{\times} \cong A \times \left( K \cap \mu \right)^{\times} $$
where $A$ is a free abelian group. If $L$ is algebraically closed, then
$$ (KL)^{\times} \cong B \times L^{\times} $$
where $B$ is a free abelian group.
\end{lem}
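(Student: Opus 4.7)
The plan is to reduce both decompositions to a freeness statement: each short exact sequence will split automatically once the quotient is a free abelian group. For the first statement, it suffices to show $K^{\times}/(K \cap \mu)$ is free abelian, because then $1 \to K \cap \mu \to K^{\times} \to K^{\times}/(K \cap \mu) \to 1$ splits. First observe that $K \cap \mu$ is finite: the algebraic closure $K_0$ of $\Q$ in $K$ is a number field (a standard consequence of $K/\Q$ being finitely generated, via a transcendence-basis argument and linear disjointness), and $K \cap \mu = K_0 \cap \mu$.

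To see $K^{\times}/(K \cap \mu)$ is free abelian I would use the filtration $K \cap \mu \subset K_0^{\times} \subset K^{\times}$. The bottom piece $K_0^{\times}/(K_0 \cap \mu)$ is handled by combining Dirichlet's unit theorem ($\cO_{K_0}^{\times} \cong (K_0 \cap \mu) \times \Z^r$) with the fact that the $\fp$-adic valuations embed $K_0^{\times}/\cO_{K_0}^{\times}$ as principal ideals inside the free abelian group $\bigoplus_{\fp} \Z$ of fractional ideals; both $\cO_{K_0}^{\times}/(K_0 \cap \mu)$ and $K_0^{\times}/\cO_{K_0}^{\times}$ are then free, hence so is $K_0^{\times}/(K_0 \cap \mu)$. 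For the top piece $K^{\times}/K_0^{\times}$, choose a normal proper model $X/K_0$ with function field $K$; since $K_0$ is algebraically closed in $K$, $X$ is geometrically integral and $H^0(X, \cO_X^{\times}) = K_0^{\times}$, so the divisor map embeds $K^{\times}/K_0^{\times}$ as the subgroup of principal Weil divisors inside the free abelian group $\text{Div}(X) = \bigoplus_D \Z$, which is therefore free. The sequence $1 \to K_0^{\times}/(K_0 \cap \mu) \to K^{\times}/(K_0 \cap \mu) \to K^{\times}/K_0^{\times} \to 1$ splits because its right-hand term is free, giving the desired freeness.

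For the second statement, note that $L$ algebraically closed forces $L$ to be algebraically closed in $KL$, while $KL/L$ is finitely generated (by any set of generators of $K/\Q$). Running the same divisor argument on a normal proper model $Y/L$ with function field $KL$ shows $(KL)^{\times}/L^{\times}$ is free abelian, so $1 \to L^{\times} \to (KL)^{\times} \to (KL)^{\times}/L^{\times} \to 1$ splits and yields $(KL)^{\times} \cong B \times L^{\times}$ with $B$ free abelian. The only substantive step in the whole plan is the algebraic-geometric input (existence of a normal proper model, geometric integrality, and the identification $H^0(X, \cO_X^{\times}) = K_0^{\times}$); everything else reduces to repeatedly splitting extensions with a free quotient.
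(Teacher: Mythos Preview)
The paper does not actually prove this lemma: it is stated with a citation to \cite[2.1]{zilber2006covers} and used as a black box, so there is no in-paper argument to compare against.

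Your argument is correct and is essentially the standard one. A couple of minor remarks. First, you do not really need geometric integrality to conclude $H^0(X,\cO_X^{\times}) = K_0^{\times}$: for $X$ proper and integral over $K_0$, the global sections $H^0(X,\cO_X)$ form a finite field extension of $K_0$ inside the function field $K$, and since $K_0$ is algebraically closed in $K$ this already forces $H^0(X,\cO_X)=K_0$. Second, the existence of a normal proper model is unproblematic in this setting (take any projective variety with the given function field and normalize; normalization is finite, hence preserves properness), so the ``substantive step'' you flag is entirely routine over a field. With these observations the proof is clean: in each case the quotient in question embeds into a free abelian group (fractional ideals, respectively Weil divisors), hence is free, and the relevant short exact sequence splits.
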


Let $K$ be field which is either a number field or the compositum of a finitely generated extension of $\Q$ with an algebraically closed field and $\bar{a}$ a multiplicitively independent tuple coming from the free abelian part of $K^{\times}$ (as in \ref{freeab}). Let $\Gamma$ be the multiplicitive subgroup of $K^{\times}$ generated by $\bar{a}$, and $\Gamma'$ the division group of $\Gamma$ in $K^{\times}$ i.e.
$$\Gamma':=\{x \in K^{\times} \ | \ x^n \in \Gamma \textrm{ for some } n \in \N \}.$$

\begin{cor}\label{corfingen}
Let $K$ be a finitely generated extension of $\Q$ and $\Gamma$ a finitely generated subgroup of $K^{\times}$. Then $[\Gamma':\Gamma]$ is finite.
\end{cor}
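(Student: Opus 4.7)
The plan is to reduce the statement to a purely group-theoretic fact about free abelian groups, using the decomposition from Lemma \ref{freeab} to strip away the roots of unity.

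By Lemma \ref{freeab}, write $K^{\times}\cong A\times T$ where $A$ is free abelian and $T=(K\cap\mu)^{\times}$. Since $K$ is finitely generated over $\Q$, the algebraic closure of $\Q$ in $K$ is a number field, and all roots of unity in $K$ lie there; consequently $T$ is a \emph{finite} group of some order $N$. Let $\pi:K^{\times}\to A$ be the projection and set $\bar\Gamma:=\pi(\Gamma)\subset A$, which is a finitely generated (free) abelian subgroup of some rank $r$.

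The crux is the following elementary observation about free abelian groups: if $\bar\Gamma\subset A$ is finitely generated of rank $r$, its division hull
\[
\bar\Gamma'\;:=\;\{x\in A\mid x^n\in\bar\Gamma\text{ for some }n\in\N\}
\]
sits inside the $\Q$-subspace $\bar\Gamma\otimes\Q$ of $A\otimes\Q$, so $\bar\Gamma'$ is a subgroup of the free abelian group $A$ contained in a $\Q$-subspace of dimension $r$, hence free abelian of rank exactly $r$. Thus $\bar\Gamma'/\bar\Gamma$ is a finitely generated torsion abelian group, and therefore finite. This is the step that really uses that $A$ is free abelian; I expect this to be the only step that needs real argument rather than bookkeeping.

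To transfer back to $K^{\times}$, I would verify $\pi(\Gamma')=\bar\Gamma'$: the inclusion $\subseteq$ is immediate, and for $\supseteq$, given $\bar y\in\bar\Gamma'$ with $\bar y^n\in\bar\Gamma$, any lift $y\in K^{\times}$ satisfies $y^n\in\Gamma\cdot T$, and then $y^{nN}\in\Gamma$, so $y\in\Gamma'$. Combining the exact sequence
\[
1\longrightarrow \Gamma'\cap T\longrightarrow \Gamma'\overset{\pi}{\longrightarrow}\bar\Gamma'\longrightarrow 1
\]
with $\pi(\Gamma)=\bar\Gamma$ yields
\[
[\Gamma':\Gamma]\;\le\;[\bar\Gamma':\bar\Gamma]\cdot |T|\;<\;\infty,
\]
which is the desired conclusion. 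The only potential subtlety is making sure the roots-of-unity factor $T$ is genuinely finite for a finitely generated extension of $\Q$, but this is immediate from the number-field observation above.
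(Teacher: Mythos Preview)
Your argument is correct and is precisely the intended derivation: the paper states this result as an immediate corollary of Lemma~\ref{freeab} and gives no separate proof, so deducing finiteness of $[\Gamma':\Gamma]$ from the free-abelian decomposition of $K^{\times}$ together with the finiteness of the torsion part is exactly what is meant. The one step you pass over a little quickly is why $\bar\Gamma'$ is finitely generated (equivalently, free of rank $r$); this follows because $\bar\Gamma$, being finitely generated, lies in a finite-rank direct summand $B\cong\Z^m$ of $A$, and torsion-freeness of $A/B$ forces $\bar\Gamma'\subset B$ as well.
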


At this point, we make the observation that if $n$ is prime to the index $[\Gamma':\Gamma]$, then
$$\Gamma^n = \Gamma \cap K^{\times^n}.$$

\begin{thm}\label{kummer}[Kummer theory]
Let $L$ be a field containing the $n^{th}$ roots of unity $\mu_n$ and $\Gamma$ a finitely generated subgroup of $L^{\times}$. Then
$$\Gal(L( \Gamma^{1 / n}) / L) \cong \Gamma /  \Gamma \cap L^{\times^n} .$$
\end{thm}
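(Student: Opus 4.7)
The plan is to construct the classical Kummer pairing and use $\mu_n$-duality to read off the isomorphism. Define
$$\langle \cdot , \cdot \rangle : \Gal(L(\Gamma^{1/n})/L) \times \Gamma \longrightarrow \mu_n, \qquad (\sigma, \gamma) \longmapsto \sigma(\gamma^{1/n}) / \gamma^{1/n}.$$
First I would check well-definedness and bilinearity: a different choice of $n$-th root of $\gamma$ differs by an element of $\mu_n \subset L$, which $\sigma$ fixes, so the ratio is unchanged; raising the ratio to the $n$-th power yields $\sigma(\gamma)/\gamma = 1$, so the value genuinely lies in $\mu_n$; and bilinearity is immediate once one consistently sets $(\gamma_1 \gamma_2)^{1/n} := \gamma_1^{1/n}\gamma_2^{1/n}$.

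Next I would verify non-degeneracy on both sides. If $\sigma$ pairs trivially with every $\gamma \in \Gamma$, then $\sigma$ fixes every $\gamma^{1/n}$, hence fixes a generating set for $L(\Gamma^{1/n})/L$ and is trivial. If $\gamma \in \Gamma$ pairs trivially with every $\sigma \in \Gal(L(\Gamma^{1/n})/L)$, then $\gamma^{1/n}$ is fixed by the whole Galois group (noting that $\mu_n \subset L$ makes $L(\Gamma^{1/n})/L$ Galois), so $\gamma^{1/n} \in L$, i.e.\ $\gamma \in \Gamma \cap L^{\times^n}$. The pairing therefore induces an injection
$$\Gamma / (\Gamma \cap L^{\times^n}) \hookrightarrow \Hom(\Gal(L(\Gamma^{1/n})/L), \mu_n).$$

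The next step is to upgrade this injection to an isomorphism and deduce the claimed form. Since $\Gamma$ is finitely generated and $\Gamma/(\Gamma \cap L^{\times^n})$ is killed by $n$, it is a finite abelian group of exponent dividing $n$. Picking a generating set $a_1,\dots,a_r$ for $\Gamma$, the pairing embeds $\Gal(L(\Gamma^{1/n})/L)$ into $\mu_n^r$, so it too is a finite abelian group of exponent dividing $n$. In the category of such groups, Pontryagin duality via $\mu_n$ (equivalently, via $\Z/n\Z$) is a cardinality-preserving involution. A cardinality count then forces the injection above to be an isomorphism, and applying $\Hom(-, \mu_n)$ once more, together with the (non-canonical) self-duality of finite abelian groups of exponent dividing $n$, gives
$$\Gal(L(\Gamma^{1/n})/L) \;\cong\; \Hom\bigl(\Gamma/(\Gamma \cap L^{\times^n}), \mu_n\bigr) \;\cong\; \Gamma/(\Gamma \cap L^{\times^n}).$$

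The main obstacle will be the surjectivity/cardinality step: one must show that $[L(\Gamma^{1/n}):L]$ equals $|\Gamma/(\Gamma \cap L^{\times^n})|$ rather than something smaller. My plan is to induct on the number of generators, at each step using that $L$ (and hence every intermediate extension) contains $\mu_n$, so that adjoining an $n$-th root of a further generator $a$ produces a cyclic extension whose degree is exactly the smallest $m \mid n$ with $a^m$ already an $n$-th power at the previous stage --- precisely the contribution measured by the quotient $\Gamma/(\Gamma \cap L^{\times^n})$.
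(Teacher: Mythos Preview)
Your argument is correct, and in fact it is already complete by the end of your third paragraph: you have established non-degeneracy on \emph{both} sides of the pairing, which gives injections
\[
G \hookrightarrow \Hom\bigl(\Gamma/(\Gamma\cap L^{\times n}),\mu_n\bigr)
\quad\text{and}\quad
\Gamma/(\Gamma\cap L^{\times n}) \hookrightarrow \Hom(G,\mu_n),
\]
and since both groups are finite abelian of exponent dividing $n$ (so that $|\Hom(A,\mu_n)|=|A|$), these two inequalities on cardinalities force $|G|=|\Gamma/(\Gamma\cap L^{\times n})|$. That \emph{is} the cardinality count; your final paragraph proposing an induction on generators is unnecessary, and you should drop it rather than flag it as the ``main obstacle''.

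By contrast, the paper argues cohomologically: it takes the long exact sequence associated to $\mu_n \hookrightarrow L(\Gamma^{1/n})^\times \twoheadrightarrow L(\Gamma^{1/n})^{\times n}$, uses Hilbert~90 to kill $H^1(G,L(\Gamma^{1/n})^\times)$, and reads off a surjection $\delta:L^\times\cap L(\Gamma^{1/n})^{\times n}\to \Hom(G,\mu_n)$ with kernel $L^{\times n}$, then identifies the source with $\Gamma L^{\times n}$. The cohomological route delivers surjectivity structurally via Hilbert~90 and fits the paper's ambient framework (the profinite Kummer map is itself a connecting homomorphism), whereas your pairing argument is more elementary and self-contained, needing only duality of finite abelian groups. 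Both are standard and equally valid proofs of this classical statement.
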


\begin{proof}
We start with the `Kummer exact sequence' of $G:=\Gal(L(\Gamma^{1/n}) / L)$-modules
$$\mu_n \lora L(\Gamma^{1/n})^{\times} \overset{x\mapsto x^n}{\lora} L(\Gamma^{1/n})^{\times^n},$$
and we take cohomology to get a long exact sequence
$$H^0(G,\mu_n)\ra H^0(G,L(\Gamma^{1/n})^{\times}) \overset{x\mapsto x^n}{\ra} H^0(G,L(\Gamma^{1/n})^{\times^n})  \overset{\delta}{\ra}  H^1(G, \mu_n) \ra H^1(G, L(\Gamma^{1/n})^{\times}). $$
$G$ acts trivially on $L$ and therefore on $\mu_n$, and $H^1(G, L(\Gamma^{1/n})^{\times})=0$ by Hilbert's theorem 90, so we get an exact sequence
$$\mu_n \lora L^{\times}  \overset{x\mapsto x^n}{\lora} L^{\times}\cap L(\Gamma^{1/n})^{\times^n}   \overset{\delta}{\lora}  \Hom(G,\mu_n) \ra 0.$$
Now we note that $G$ is a finite abelian group of exponent $n$, and so is isomorphic to its character group $\Hom(G,\mu_n)$, giving
$$\Gal(L(\Gamma^{1/n}) / L) \cong \left( L^{\times} \cap L(\Gamma^{1/n})^{\times^n} \right) /L^{\times^n} =\Gamma L^{\times^n} / L^{\times^n} \cong \Gamma /  \Gamma \cap L^{\times^n} .$$
\end{proof}

\subsection{Horizontally}

\begin{pro}\label{arith2}\cite[V 4.2]{lang1979elliptic}
Let $K$ be a number field, $n$ be coprime to $2[\Gamma':\Gamma]$ and
$$\Gal(K(\mu_n) / K) \cong (\Z / N \Z)^{\times}.$$
Then 
$$\Gamma \cap K(\mu_n)^{\times^{l^n}} = \Gamma^n.$$
\end{pro}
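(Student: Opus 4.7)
The plan is to reduce the statement (which I read as $\Gamma \cap K(\mu_n)^{\times^n} = \Gamma^n$, treating the exponent in the display as a typo) to a one-cocycle computation in $H^1(G,\mu_n)$ where $G := \Gal(K(\mu_n)/K)$, combined with the elementary observation stated immediately before Theorem \ref{kummer}. Pick $a \in \Gamma \cap K(\mu_n)^{\times^n}$ and write $a = b^n$ with $b \in K(\mu_n)^{\times}$. Because $\sigma(b)^n = \sigma(a) = a = b^n$, the ratio $c_\sigma := \sigma(b)/b$ lies in $\mu_n$ for every $\sigma \in G$, and $\sigma \mapsto c_\sigma$ is a $1$-cocycle. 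If I can show it is a coboundary, i.e.\ $c_\sigma = \sigma(\eta)/\eta$ for some $\eta \in \mu_n$, then $\eta b$ is $G$-fixed, hence lies in $K^{\times}$, and $a = (\eta b)^n \in K^{\times^n}$.

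To trivialise the cocycle I exploit the assumption that the cyclotomic character $\chi : G \to (\Z/n\Z)^\times$ is an isomorphism. In particular $G$ is abelian and contains a distinguished element $\sigma_0$ with $\chi(\sigma_0) = -1$ (``complex conjugation''). Writing $\mu_n$ additively, the cocycle relation becomes $c_{\sigma\tau} = c_\sigma + \chi(\sigma) c_\tau$. Abelianness of $G$ forces $c_{\sigma\tau} = c_{\tau\sigma}$, which rearranges to $(1 - \chi(\tau))c_\sigma = (1 - \chi(\sigma))c_\tau$; specialising $\tau = \sigma_0$ yields $2 c_\sigma = -(\chi(\sigma) - 1) c_{\sigma_0}$. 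Since $n$ is coprime to $2$, the integer $2$ is invertible in $\Z/n\Z$, so $\eta := -c_{\sigma_0}/2 \in \mu_n$ is well-defined and satisfies $c_\sigma = (\chi(\sigma) - 1)\eta = \sigma(\eta) - \eta$, as required.

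To descend from $K^{\times^n}$ back down to $\Gamma^n$ I invoke the observation stated above Theorem \ref{kummer}: since $\gcd(n,[\Gamma':\Gamma]) = 1$, any $a \in \Gamma \cap K^{\times^n}$ has $a = b^n$ for some $b \in K^{\times}$; the definition of $\Gamma'$ forces $b \in \Gamma'$, and writing $1 = rn + s[\Gamma':\Gamma]$ (using that $b^{[\Gamma':\Gamma]} \in \Gamma$) gives $b = a^r \cdot (b^{[\Gamma':\Gamma]})^s \in \Gamma$, so $a \in \Gamma^n$. Chained with the previous step this yields $\Gamma \cap K(\mu_n)^{\times^n} \subseteq \Gamma^n$, the reverse inclusion being tautological. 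The main obstacle is spotting the right manoeuvre to trivialise the cocycle: once one realises that abelianness of $G$ together with the existence of $\sigma_0$ with $\chi(\sigma_0) = -1$ solves for $\eta$ in a single step, everything else is formal, and the factor of $2$ in the denominator is precisely why the hypothesis demands that $n$ be coprime to $2$.
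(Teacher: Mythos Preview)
Your argument is correct (modulo a harmless slip: with $c_\sigma=\sigma(\eta)/\eta$ it is $b/\eta$ rather than $\eta b$ that is $G$-fixed, but since $\eta\in\mu_n$ one still has $a=(b/\eta)^n\in K^{\times n}$), but it follows a genuinely different route from the paper's. The paper argues by contradiction at a single prime $p\mid n$: if $\alpha\in\Gamma$ has a $p$-th root $\beta\in K(\mu_n)\setminus K$, then $X^p-\alpha$ is irreducible over $K$, and since the hypothesis gives $[K(\mu_p):K]=p-1$, the splitting field $K(\mu_p,\beta)/K$ is a non-abelian extension of degree $p(p-1)$, contradicting its containment in the abelian extension $K(\mu_n)/K$; oddness of $n$ enters because for $p=2$ this splitting field is always abelian. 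You instead show directly that $H^1\big((\Z/n\Z)^\times,\mu_n\big)=0$ by exploiting the element $\sigma_0$ with $\chi(\sigma_0)=-1$: your computation is exactly Sah's lemma applied with $\alpha=\sigma_0$, so that $\alpha-1$ acts as $-2$ on $\mu_n$ and kills $H^1$ once $2$ is invertible modulo $n$. This is precisely the mechanism the paper reserves for the \emph{vertical} step (Proposition~\ref{sahcor}), so your proof in effect unifies the horizontal and vertical arguments under one cohomological template and makes the role of the factor $2$ in the hypothesis transparent, whereas the paper's horizontal argument is a more hands-on piece of field theory that avoids cocycles altogether.
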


\begin{proof}
Suppose not. Then for some prime $p|n$ there is $\alpha \in \Gamma$ such that
$$\alpha = \beta^p \ \ , \ \ \beta \in K(\mu_n) - K$$
(where $\beta \notin K$ since $n$ is coprime to $2[\Gamma':\Gamma]$). Now since $\beta \notin K$, the polynomial $X^p-\alpha$ is irreducible over $K$ and so $\beta$ has degree $p$ over $K$. But we have
$$[K(\mu_p) :K]=p-1$$
so $\beta$ has degree $p$ over $K(\mu_p)$ also. The Galois extension
$$K(\mu_p, \beta) / K$$
is non-abelian and therefore cannot be contained in $K(\mu_n)$, since the extension $K(\mu_n) / K$ is abelian.
\end{proof}

\begin{pro}
Let $K$ be a finitely generated extension of $\Q$. Then 
$$\Gal(K(\mu) / K)$$
is isomorphic to an open subgroup of $\hat{\Z}^{\times}$.
\end{pro}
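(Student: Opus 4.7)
The plan is to reduce to the classical fact $\Gal(\Q(\mu)/\Q) \cong \hat{\Z}^{\times}$ via the restriction map, and then argue that the image has finite index, hence is open.

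First I would recall the standard translation theorem from Galois theory: since $K(\mu) = K \cdot \Q(\mu)$ is a compositum, the restriction map gives an isomorphism
\begin{equation*}
\Gal(K(\mu)/K) \overset{\sim}{\lora} \Gal(\Q(\mu)/K\cap \Q(\mu)),
\end{equation*}
identifying $\Gal(K(\mu)/K)$ with a closed subgroup of $\Gal(\Q(\mu)/\Q) \cong \hat{\Z}^{\times}$.

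The key step is then to show that $K \cap \Q(\mu)$ is a finite extension of $\Q$. For this I would invoke the general fact that if $K/\Q$ is finitely generated, then the relative algebraic closure of $\Q$ in $K$ is a finite extension of $\Q$. One way to see this is to pick a transcendence basis $t_1,\dots,t_d$ of $K/\Q$; then $K$ is a finite extension of the purely transcendental field $\Q(t_1,\dots,t_d)$, and the algebraic closure of $\Q$ in $\Q(t_1,\dots,t_d)$ is just $\Q$ itself, so the algebraic closure of $\Q$ in $K$ injects into a finite extension of $\Q$. In particular $K \cap \Q(\mu) \subseteq K \cap \bar{\Q}$ is finite over $\Q$.

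Finally, a finite extension $L = K \cap \Q(\mu)$ of $\Q$ corresponds under the Galois correspondence to a finite-index closed subgroup $\Gal(\Q(\mu)/L)$ of $\Gal(\Q(\mu)/\Q)$. A closed subgroup of finite index in a profinite group is open, which concludes the argument. The only mild obstacle is the finiteness of $K \cap \bar{\Q}$ over $\Q$; everything else is formal Galois theory.
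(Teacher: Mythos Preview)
Your proof is correct and is exactly the argument the paper has in mind: reduce to $\Gal(\Q(\mu)/\Q)\cong\hat{\Z}^{\times}$ via restriction and observe that the resulting subgroup is closed of finite index, hence open. The paper's own proof says only that the result ``follows almost immediately'' from the classical fact plus the equivalence open $\Leftrightarrow$ closed of finite index; you have simply spelled out the finiteness of $K\cap\Q(\mu)$ over $\Q$ that the paper leaves implicit.
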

\begin{proof}
It is an elementary result that
$$\Gal(\Q(\mu) / \Q) \cong  \hat{\Z}^{\times}$$
and the result follows almost immediately, remembering that open is equivalent to closed and finite index.
\end{proof}

\begin{proof} [Proof of Lemma \ref{horiz}]
Since $\Gal(K(\mu) / K)$
is isomorphic to an open subgroup of $\hat{\Z}^{\times}$, there is $l_0$ such that if $l \geq l_0$ then
$$\Gal(K(\mu_{l^n}) / K) \cong (\Z / l^n \Z)^{\times}$$
for all $n$. By Kummer theory (\ref{kummer})
$$\Gal(K(\mu_{l^n}, \Gamma^{1 / l^n}) / K(\mu_{l^n})) \cong \Gamma /  \Gamma \cap K(\mu_{l^n})^{\times^{l^n}} $$
but by \ref{corfingen}, the index $[\Gamma':\Gamma]$ is finite and if $l \geq l_0$ is coprime to $2[\Gamma':\Gamma]$ then by \ref{arith2} we have
$$\Gamma \cap K(\mu_{l^n})^{\times^{l^n}} = \Gamma^{l^n}$$
and so 
$$\Gal(K(\mu_{l^n}, \Gamma^{1 / l^n}) / K(\mu_{l^n})) \cong \Gamma /  \Gamma^{l^n} \cong (\Z / l^n \Z)^r. $$
\end{proof}

\subsection{Vertically}

\begin{lem}[Sah's Lemma]
Let $M$ ba a $G$-module and let $\alpha$ be in the centre $Z(G)$. Then $H^1(G,M)$ is killed by the endomorphism
$$x \mapsto \alpha x-x$$
of $M$. In particular, if $f$ is a 1-cocycle and $g \in G$ then
$$(\alpha -1) f(g) = (g-1)f(\alpha).$$
\end{lem}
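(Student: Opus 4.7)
The plan is to work directly with the cocycle condition and exploit the commutativity of $\alpha$ with arbitrary group elements. Recall that a $1$-cocycle $f:G\to M$ satisfies $f(gh)=f(g)+g\cdot f(h)$. I would first expand $f(\alpha g)$ and $f(g\alpha)$ in the two obvious ways:
\begin{align*}
f(\alpha g) &= f(\alpha) + \alpha\cdot f(g),\\
f(g\alpha) &= f(g) + g\cdot f(\alpha).
\end{align*}

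Since $\alpha\in Z(G)$ we have $\alpha g = g\alpha$, so the left-hand sides agree. Rearranging yields
$$\alpha\cdot f(g) - f(g) = g\cdot f(\alpha) - f(\alpha),$$
which is exactly the identity $(\alpha-1)f(g)=(g-1)f(\alpha)$ asserted in the statement. This is the concrete part of the lemma and is essentially a one-line computation once one writes down the cocycle condition twice.

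To deduce the cohomological statement, I would then observe that the right-hand side $g\mapsto (g-1)f(\alpha)$ is by definition the $1$-coboundary $df(\alpha)$ associated to the element $f(\alpha)\in M$. Hence $(\alpha-1)f$, viewed as a new $1$-cocycle, is a coboundary and represents the zero class in $H^1(G,M)$. Since every cohomology class is represented by some cocycle $f$, the endomorphism $x\mapsto \alpha x-x$ annihilates all of $H^1(G,M)$.

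There is no real obstacle here: the proof is essentially a two-line manipulation. The only thing to be careful about is to keep the left-action conventions consistent (writing $\alpha\cdot m$ rather than $m\alpha$ so the cocycle identity has the standard form), and to note that the argument works for any coefficient module, with no finiteness or topological hypothesis on $G$ required; this is what makes the lemma a convenient tool for the vertical openness argument that follows.
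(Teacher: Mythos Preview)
Your argument is correct and is in fact the standard proof of Sah's Lemma. The paper itself does not supply a proof of this lemma; it is stated as a known result and then applied (in multiplicative notation) in the subsequent proposition, so there is nothing to compare against.
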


Note that we presented Sah's lemma in its familiar additive notation, but we will now apply it in multiplicative notation.

\begin{pro} \label{sahcor}
Let $a \in K^{\times}$ and suppose that $\rho_{l^m,a}(h)=0$ for all  $h \in H:= \linebreak \Gal(K(\mu_{l^m} ,a^{1/l^m}) / K(\mu_{l^m}))$.  Then there is a constant $\lambda (K,l) \in \N$ (independent of $m$) such that $a^{\lambda} \in K^{\times^{l^m}}$.
\end{pro}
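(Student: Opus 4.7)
The plan is to translate the hypothesis into a statement about a Kummer-type cocycle on the cyclotomic Galois group $G := \Gal(K(\mu_{l^m})/K)$, and then apply Sah's lemma to control how far this cocycle is from being a coboundary. The hypothesis $\rho_{l^m,a}|_H = 0$ forces every $h \in H$ to fix the chosen $l^m$-th root $a^{1/l^m}$, so $H$ is trivial and there exists $b \in K(\mu_{l^m})$ with $b^{l^m} = a$. Define
$$\psi : G \lora \mu_{l^m}, \qquad \psi(\sigma) := \sigma(b)/b;$$
this is a $1$-cocycle for the natural cyclotomic action of $G$ on $\mu_{l^m}$, and a direct unwinding shows that $\psi^N$ is a coboundary of some $\eta \in \mu_{l^m}$ iff $b^N/\eta \in K^{\times}$, iff $a^N \in K^{\times^{l^m}}$. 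So the problem reduces to showing that $\psi^{\lambda}$ is a coboundary for some $\lambda = \lambda(K, l)$ independent of $m$.

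Next, apply Sah's lemma. Since $G$ embeds abelianly into $(\Z/l^m\Z)^{\times}$ via the cyclotomic character $\chi$, every $\alpha \in G$ is central, and Sah's identity written multiplicatively (with $\alpha$ acting on $\mu_{l^m}$ by $\zeta \mapsto \zeta^{\chi(\alpha)}$) becomes
$$\psi(g)^{\chi(\alpha)-1} \;=\; g\bigl(\psi(\alpha)\bigr)/\psi(\alpha) \qquad \text{for every } g \in G,$$
exhibiting $\psi^{\chi(\alpha) - 1}$ as the coboundary of $\psi(\alpha) \in \mu_{l^m}$. The problem therefore collapses to finding an $\alpha \in G$ with $\chi(\alpha) - 1$ of bounded $l$-adic valuation.

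This is where the number-field hypothesis enters, and is the main obstacle: $\chi(\Gal(K(\mu_{l^{\infty}})/K)) \subset \Z_l^{\times}$ is an open subgroup, hence contains $1 + l^{v_0}\Z_l$ for some $v_0 = v_0(K, l)$ independent of $m$. For $m > v_0$, reducing mod $l^m$ yields $\alpha \in G$ with $\chi(\alpha) \equiv 1 + l^{v_0} \pmod{l^m}$, so $\chi(\alpha) - 1 = l^{v_0}$ in $\Z/l^m\Z$ and Sah gives $\psi^{l^{v_0}}$ as a coboundary, whence $a^{l^{v_0}} \in K^{\times^{l^m}}$. For $m \leq v_0$ the same conclusion is automatic, since $a^{l^{v_0}} = (a^{l^{v_0 - m}})^{l^m}$. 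Taking $\lambda := l^{v_0}$ finishes the proof; note that in the geometric setting, where $K$ is algebraically closed, this openness fails entirely but the hypothesis already forces $a \in K^{\times^{l^m}}$ directly, which is why no such descent is needed there.
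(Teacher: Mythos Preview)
Your proof is correct and follows essentially the same route as the paper's: both deduce $b := a^{1/l^m} \in K(\mu_{l^m})$ from the hypothesis, form the Kummer cocycle $g \mapsto g(b)/b$ on the cyclotomic Galois group, and invoke Sah's lemma with a central element $\alpha$ of cyclotomic character $1+l^{m_0}$ (supplied by openness of the cyclotomic image for $K$ a number field) to exhibit the $l^{m_0}$-th power of the cocycle as a coboundary. Your packaging of the endgame via the equivalence ``$\psi^N$ is a coboundary iff $a^N \in K^{\times^{l^m}}$'' is slightly crisper than the paper's explicit root-extraction $\eta^\lambda = \zeta$, but the argument is identical in substance.
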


\begin{proof}
If $b$ is an $l^m$-th root of $a$, and $\rho_{l^m,a}(h)=0$ for all $h \in H$, then $b$ is in $K(\mu_{l^m})$ since it is fixed by everything in $H$ and the map
$$g \mapsto \frac{g b}{b}$$
is a cocycle from $G:=\Gal(K(\mu) / K)$ to $\mu_{l^m}$. $G$ is isomorphic to an open subgroup of $\hat{\Z}^{\times}$, and units in $\Z_l$ are those which aren't divisible by $l$, so there is $\alpha \in G$ such that $\alpha$ acts on $\mu$ as raising to the power $\lambda:=l^{m_0}+1$ for some $m_0$. Now by Sah's lemma there exists $\zeta \in \mu_{\lambda}$ such that
$$ \left( \frac{g b}{b} \right)^{\lambda} = \frac{g \zeta}{\zeta}.$$
Now if we let $\eta^\lambda = \zeta$, then we have
$$ \left( \frac{g b}{b} \right)^\lambda = \frac{g \zeta}{\zeta} = \frac{g \eta^\lambda}{\eta^\lambda} = \left(\frac{g \eta}{\eta}\right)^\lambda,$$
so $c:=(b. \eta^{-1})^\lambda $ is fixed under all $g \in G$ and is therefore in $K^{\times}$, and $a^\lambda=c^{l^m}$.
\end{proof}

%\begin{lem}
%Let $S$ be a simple $R$-module. Then the submodules of $S^n$ are exactly those of the form
%$$\{ \langle x_1,...,x_n \rangle \ | \ \sum_{i} \eta_{i}x_i=0 \}$$
%\end{lem}
%with $\eta_{i} \in \End_R(S)$.

%Lang proves this for an ellitpic curve using a bit of Galois cohomology (\cite[V]{lang1979elliptic} ), and the %same proof goes through for $\G_m$ but there should be a simpler one. I wanted to do something like %the following: Fix an isomorphism of $(\pi_1^{et})_l(\C^{\times})^r$ with $\Z_l^r$ and identify the two objects. The %image of $\rho_{l^{\infty}}$ is a closed subgroup of $\Z_l^r$ and is therefore a $\Z_l$-submodule. $\Z_l$ %is a principal ideal domain, and therefore by the structure theorem for finitely generated modules over a %PID, we know that the image is isomorphic to $\Z_l^m$ for some $0 \leq m \leq r$. A $\Z_l$-submodule of %$\Z_l$ is either 0, or is of the form $l^k \Z_l$ for some $k$, in which case it has index $l^k$ in $\Z_l$ and %is isomorphic to $\Z_l$. The idea would be to use the irreducibility properties of the polynomial $X^{l^s}-%a=0$, but I haven't been able to make this work yet.

\begin{proof}[Proof of \ref{vert}]
The image of $\rho_{\bar{a},l^ \infty }$ is a closed subgroup of $(\pi_1^{et})_l(\C^{\times})^r \cong \Z_l^r$ and is therefore a $\Z_l$-submodule. $\Z_l$ is a PID, so a $\Z_l$-submodule of the free module $\Z_l^r$ is free of dimension $s \leq r$. If $s \neq r$ then there are $\eta_1 ,...,\eta_r \in \Z_l$, not all zero such that
$$\eta_1 \rho_{a_1}(\sigma)+\cdots + \eta_r \rho_{a_r}(\sigma)=0$$
for all $\sigma$. Let $\eta_{j,m} \in \Z$ such that $\eta_{j,m}\equiv \eta_j \mod l^m$, and let
$$a=\eta_{1,m}a_1 + \cdots + \eta_{r,m}a_r.$$
Then $\rho_{a,l^m} = 0$ on $\Gal(K(\mu_{l^m},a^{1/l^m}) / K(\mu_{l^m} ))$ (since $\rho_{-,l^m}(-)$ as a function of two variables is bilinear) 
so by \ref{sahcor} $a^{\lambda} \in K^{\times^{l^m}}$, and this cannot happen for aribtrarily large $m$ since $K$ is finitely generated. So the image of $\rho_{\bar{a},l^ \infty }$ is an $r$-dimensional $\Z_l$-submodule of $\Z_l^r$. A $\Z_l$-submodule of $\Z_l$ is either 0, or is of the form $l^k \Z_l$ for some $k$, in which case it has index $l^k$ in $\Z_l$ and is isomorphic to $\Z_l$ and the result follows.
\end{proof}

Geometric homogenity follows by an identical argument, except we use the second statement of \ref{freeab} and there is no need to use Sah's lemma. 

\section{Grothendieck's section conjecture}

Taking into account the discussion in the introduction, it is interesting to draw analogies with Grothendieck's section conjecture of anabelian geometry. The conjecture involves hyperbolic curves i.e. projective algebraic curves of genus greater than one, minus finitely many points. A version of the section conjecture states that if $C$ is a hyperbolic curve over a finitely generated extension $K$ of $\Q$, then the profinite kummer map
$$\rho: C(K) \ra H^1(G_K , \pi_1^{et})$$
is a bijection. Here $\pi_1^{et}$ will be non-abelian so we are looking at non-abelian cohomology. However the construction of the map is the same, but with non-abelian $H^1$ we quotient cocycles by a right action of $\pi_1^{et}$
given by 
$$f.g = g^{-1} f(\sigma) \sigma(g)$$
where $f$ is a cocycle and $g \in \pi_1^{et}$. This is a generalization of the abelian $H^1$. \newline

So applied (wrongly) to this situation, the section conjecture would claim that the profinite Kummer map
\begin{align*}
 \rho: \G_m(K) &\ra H^1(G_K , \pi_1^{et})\\
 x & \mapsto \left[ \sigma \mapsto \frac{\sigma(x^{1/n})_n}{(x^{1/n})_n} \right]
\end{align*}
is a bijection. \newline

 The injectivity of the statement holds here: Suppose some $1 \neq x \in \G_m(K)$ is mapped to the class of trivial cocycle $\rho_x(\sigma)=[1]$. Then this means that $G_K$ does not conjugate any two elements in the fibre $\hat{p}^{-1}(x)$ at all. By Kummer theory (\ref{kummer}), this means that there $n$'th roots of $x$ in $K$ for arbitrarily large $n$ and this is impossible by \ref{freeab}. \newline

 On the other hand, the surjectivity fails since $\pi_1^{et} \cong \hat{\Z}$ is abelian and so $H^1(G_K, \pi_1^{et})$ is a $\hat{\Z}$-module, but $|\G_m(K)|=\aleph_0$. So there are elements of $H^1(G_K, \pi_1^{et})$ which come from fibres in the pro-\'etale cover about a rational point $x \in \G_m(K)$. \newline

%If $K$ contains the roots of unity, then we can explicitly construct a bad element of $H^1$ as follows: Let $x \in K$ and consider $u = (x^{1/n})_n$ and $v=(x^{1/n'})_n $ in the fibre $\hat{p}^{-1}%(x)$. Then we know by \label{thumb_0} that there is $g \in \pi_1^{et} \cong \hat{\Z}$ such that $v=u.g$. Now since $\Hom(G_K , \pi_1^{et})$ is a $\hat{\Z}$-module, we may multiply %$\rho_x \in \Hom(G_K , \pi_1^{et})$ by $g^{-1} \in \hat{\Z}^{\times}$ to get an element of $\Hom(G_K , \pi_1^{et})$ which doesn't conjugate $u$ and $v$, i.e. not coming from a rational point.

%So in this case if you make a theory $T_{sec}$ where types realised in an $\aleph_0$ saturated model of $T_{sec}$ correspond to arbitrary elements of $H^1(G_K, \pi_1^{et})$, then the %pro-\'etale cover, considered as a model of this theory will not be $\aleph_0$-saturated and the theory will not have a unique model of cardinality continuum.

\newpage

\end{document}